\newenvironment{poc}{\begin{proof}[Proof of claim]}{\end{proof}}
\crefname{equation}{}{}
\crefname{figure}{{\sc Figure}}{{\sc Figure}}
\crefname{subsection}{Subsection}{Subsections}
\newcommand{\nc}{\newcommand}
\newtheorem{theorem}{Theorem}[section]
\newtheorem{proposition}[theorem]{Proposition}
\newtheorem{lemma}[theorem]{Lemma}
\newtheorem{corollary}[theorem]{Corollary}
\newtheorem{conjecture}[theorem]{Conjecture}
\newtheorem{claim}[theorem]{Claim}
\newtheorem*{claim*}{Claim}
\theoremstyle{definition}
\newtheorem{definition}[theorem]{Definition}
\newtheorem{remark}[theorem]{Remark}
\newcommand{\F}{{\mathbb F}}
\newcommand{\Z}{{\mathbb Z}}
\numberwithin{equation}{section} 
\numberwithin{figure}{section}
\numberwithin{table}{section}
\nc{\Beaver}[1]{\todo[size=\tiny,color=cyan!10]{#1 \\ \hfill --- Beaver}}
\nc{\BEAVER}[1]{\todo[size=\tiny,inline,color=cyan!10]{#1
		\\ \hfill --- Beaver}}
\nc{\Semin}[1]{\todo[size=\tiny,color=magenta!10]{#1 \\ \hfill --- Semin}}
\nc{\SEMIN}[1]{\todo[size=\tiny,inline,color=magenta!10]{#1
		\\ \hfill --- Semin}}
\nc{\nt}[1]{\todo[size=\tiny,color=exgreen!10]{#1 \\ \hfill --- Note}}
\nc{\NT}[1]{\todo[size=\tiny,inline,color=exgreen!10]{#1
		\\ \hfill --- Note}}
\title{Product representations of polynomials over finite fields}
\author{Hyunwoo Lee}
\address{Department of Mathematical Sciences, KAIST \\ and Extremal Combinatorics and Probability Group, Institute for Basic Science \\ Daejeon\\ South Korea}
\email{hyunwoo.lee@kaist.ac.kr}
\author{Chi Hoi Yip}
\address{School of Mathematics\\ Georgia Institute of Technology\\ GA 30332\\ United States}
\email{cyip30@gatech.edu}
\author{Semin Yoo}
\address{Discrete Mathematics Group \\ Institute for Basic Science \\ 55 Expo-ro Yuseong-gu, Daejeon 34126 \\ South Korea}
\email{syoo19@ibs.re.kr}
\subjclass[2020]{11B30, 11T06, 05D05}
\keywords{product representation, polynomial, finite field}
\begin{document}

\begin{abstract}
Erd\H os, S\'ark\"ozy, and S\'os studied the asymptotics of the maximum size of a subset of $\{1,2,\ldots, N\}$ such that it does not contain $k$ distinct elements whose product is a perfect square. More generally, Verstra\"ete proposed a conjecture regarding the asymptotic behavior of the same quantity with the set of perfect squares replaced by the value set of a polynomial in $\Z[x]$. In this paper, we study a finite field analogue of Verstra\"ete's conjecture.
\end{abstract}

\maketitle

\section{Introduction}
Throughout the paper, let $q$ be a prime power. Let $\F_q$ be the finite field with $q$ elements, and $\F_q^*$ be its multiplicative group. For a positive integer $n$, we use $\Z_n$ to denote the cyclic group $\Z/n\Z$.

In 1995, Erd\H os, S\'ark\"ozy, and S\'os \cite{ESS95} initiated the study of product representations of squares. More precisely, for each integer $k\geq 2$ and positive integer $N$, let $F_k(N)$ be the maximum size of a subset $A$ of $\{1,2,\ldots, N\}$ such that it does not contain $k$ distinct elements in $A$ whose product is a perfect square. They showed that the asymptotic behavior of $F_k(N)$ strongly depends on the parity of $k$. Moreover, they determined the asymptotics of $F_k(N)$ for $k$ even and $k=3$. There have been several refinements to the bounds on $F_k(N)$ for even $k\geq 4$; see, for example, the work of Gy\"ori \cite{Gyori95}, Naor--Verstra\"{e}te \cite{NV08}, Pach \cite{P15, P19}, and Pach--Vizer \cite{PV23}. 

Granville and Soundararajan \cite{GS01} studied a variant of this problem: they provided an asymptotic formula for the maximum size of a subset $A$ of $\{1,2,\ldots, N\}$ such that $A$ does not contain an odd number of distinct elements whose product is a perfect square. Recently, Tao \cite{T25} made important progress on $F_k(N)$ for $k\geq 5$ odd. 

More generally, in 2006,  Verstra\"ete \cite{V06} studied product representations of polynomials. For a polynomial $h \in \Z[X]$, and positive integers $k,N$, let $F_k(N;h)$ be the maximum size of a subset $A$ of $\{1,2,\ldots, N\}$ such that it does not contain $k$ distinct elements in $A$ whose product is in the value set of $h$, that is, there are no distinct $a_1,a_2,\ldots, a_k\in A$, such that $a_1a_2\cdots a_k=h(x)$ for some $x\in \Z$. He formulated the following conjecture:

\begin{conjecture}[{\cite[Conjecture 3]{V06}}]\label{conj:V}
Let $h \in \Z[X]$ and let $k$ be a positive integer. 
Then, for some positive constant $\rho=\rho(k,h)$ depending only on $k$ and $h$, either $F_k(N;h)/N \to \rho$ or $F_k(N;h)/\pi(N) \to \rho$ as $N \to \infty$.
\end{conjecture}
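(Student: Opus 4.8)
The plan is to prove the dichotomy by first isolating the arithmetic data of the pair $(k,h)$ that decides the regime, and then establishing matching lower and upper bounds in each case. Write $V(h)=h(\Z)$ and $d=\deg h$, so that $|V(h)\cap[1,N]|\asymp N^{1/d}$ and the forbidden configuration is a product of $k$ distinct elements landing in this sparse set. Two mechanisms can produce a positive-density extremal set (the $\rho N$ regime). The first is local: if the values of $h$ are constrained modulo some fixed modulus or in their $p$-adic valuations — as for $h(x)=2x^2$, all of whose values have odd $2$-adic valuation, so that an all-odd $A$ has every $k$-fold product of even $2$-adic valuation and hence outside $V(h)$ — then a union of residue and valuation classes of positive density avoids every product. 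The second mechanism is combinatorial and already appears for powers at $k=2$: since $a_1a_2\in V(x^m)$ forces $a_1,a_2$ into the same $m$-th-power class, selecting one integer per class gives a positive-density avoiding set. First I would carry out a finite local analysis of $V(h)\bmod p^j$ and of the valuations occurring among the values $h(x)$, and combine it with the factorization anatomy of short products, to decide for each $(k,h)$ whether a positive-density avoiding set exists; the assertion to be proved is that its existence is exactly the $\rho N$ regime and its failure the $\rho\pi(N)$ regime.

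The lower bounds then split along these two mechanisms. In the dense regime I would take $A$ to be the integers in $[1,N]$ lying in a prescribed union of residue or valuation classes modulo a fixed $M=M(k,h)$ furnished by the local analysis (or, in the combinatorial case, one representative per power-class), chosen so that the congruence $a_1\cdots a_k\equiv h(x)\pmod{M}$ is unsolvable with all $a_i$ admissible; this yields $|A|\asymp N$ and, after computing the class count to leading order, the exact constant $\rho$. In the sparse regime I would take $A$ to be a prime-like set — the primes in $(N^{1/2},N]$, or more generally integers with a single prime factor exceeding $N^{1-o(1)}$ — and verify, using the factorization anatomy of the integers $h(x)$ together with the finite-field value-set estimates established in this paper (reduce $h$ modulo the primes dividing a putative product), that no $k$ of these elements multiply to a value of $h$, giving $|A|\gtrsim\pi(N)$.

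The heart of the argument is the matching upper bound, handled differently in the two regimes. In the dense regime the upper bound should follow from a pigeonhole/Turán argument: encode each $a\in A$ by its class-data in the finite local configuration space, and show that exceeding the constructed density forces, by pigeonhole on classes, a $k$-term product into $V(h)$; matching the lower bound to leading order then delivers the true limit $F_k(N;h)/N\to\rho$. The sparse regime is far harder. I would model the forbidden configurations as a $k$-uniform hypergraph on $A$ whose edges are the $k$-subsets with product in $V(h)$, bound the edge count by combining the large sieve with the value-set density $N^{k/d}$, and then run a container/entropy argument in the spirit of Tao's recent treatment of $F_k(N)$ for odd $k$ \cite{T25} to force an independent set of size at most $(1+o(1))\rho\,\pi(N)$.

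The main obstacle is precisely this sparse-regime upper bound for general $h$. For powers $h(x)=x^m$ the membership $a_1\cdots a_k\in V(h)$ is a linear condition on exponent vectors over $\Z_m$, so the entire problem linearizes over a finite abelian group, which is what makes the entropy and sieve methods tractable. For general $h$ the value set $V(h)$ carries no multiplicative structure, so no such linearization is available, and one must instead control, uniformly in $N$, how often a squarefree (or nearly squarefree) integer with a prescribed number of prime factors can coincide with $h(x)$ — an anatomy-of-integers question entangled with the arithmetic of $h$. Bridging this gap, presumably by feeding the local finite-field inputs developed here into sieve estimates for the number of values of $h$ with a restricted number of prime factors, is where the difficulty concentrates, and any proof of the full conjecture must confront it.
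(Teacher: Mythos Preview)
The statement you are trying to prove is not a theorem of the paper: it is \cref{conj:V}, a conjecture attributed to Verstra\"ete, and the paper gives no proof of it. More importantly, the paper explicitly records that this conjecture has been \emph{disproved}: Fleiner, Juh\'asz, K\"ov\'er, Pach, and S\'andor showed
\[
F_6(N;x^3)=\bigl(1+o(1)\bigr)\,\frac{N\log\log N}{\log N},
\]
which is of neither of the two orders $\rho N$ nor $\rho\,\pi(N)$ allowed by the conjecture. So any purported proof of the dichotomy must contain an error.

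Your outline makes the false assertion most visible in the ``sparse regime'' upper bound. You claim that once no positive-density avoiding set exists, a container/entropy argument \`a la Tao will force $|A|\le(1+o(1))\rho\,\pi(N)$. The pair $(k,h)=(6,x^3)$ is in your sparse regime (there is no positive-density avoiding set), yet the true extremal size is $\Theta(N\log\log N/\log N)$, strictly larger than any constant multiple of $\pi(N)$. Concretely, integers of the form $p\cdot q$ with $p,q$ prime and $p<q$ give an avoiding set of that size, because a product of six such numbers has twelve prime factors with multiplicity and cannot be a perfect cube without forcing coincidences that a careful choice of the pairs prevents. Your pigeonhole/Tur\'an step in the dense regime and your local analysis are not the problem; the gap is the unjustified leap that ``not dense'' implies ``at most $\pi(N)$ up to constants''. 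That leap is exactly what the counterexample refutes, and no amount of sieve or entropy input will repair it, since the target inequality is false.
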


Verstra\"ete \cite{V06} made some partial progress towards \cref{conj:V}. In particular, he confirmed the conjecture for a family of polynomials. Recently, Fleiner, Juh\'asz, K\"ov\'er, Pach, and S\'andor \cite{FJKPS} studied product representations of perfect cubes, that is, they studied the asymptotics of $F_k(N;x^3)$. 
In particular, they disproved \cref{conj:V} by showing that $$F_6(N;x^3)=\big(1+o(1)\big) \frac{N\log \log N}{\log N}.$$
Also, very recently, Pach and S\'andor \cite{PS26} proved several new estimates regarding $F_d(N; x^d)$. In particular, they proved that when $d$ is a prime power, one has the exact formula
\[
F_d(N;x^d)=\sum_{j=1}^{d-1}\pi(N/j)
\]
for all sufficiently large $N$ (compared to $d$).

\medskip

Motivated by the results discussed above, in this paper, we study finite field analogues of \cref{conj:V}. Before stating our main results, we first introduce the finite field analogue of $F_k(N;h)$ and provide some additional background. 

\begin{definition}\label{defn}
For a positive integer $k$, a prime power $q$, and a non-constant polynomial $h\in \F_q[x]$, define
\[
F_k(q;h):=\max\{|A|: A\subset \F_q^* \text{ and $A$ satisfies }(\star)\},
\]
where $(\star)$ denotes that for all distinct $a_1,\dots,a_k\in A$ and all $x\in\F_q$, 
$a_1\cdots a_k \neq h(x)$.    
\end{definition}

We remark that for some special polynomials $h$, the quantity $F_k(q;h)$ has been well-studied. 
For example, if $q$ is an odd prime power, $\alpha$ is a non-square in $\F_q$, and $h(x)=\alpha x^2-1$, then $F_k(q;h)$ is ``essentially" the same as the maximum size of a $k$-Diophantine tuple over $\F_q$ \cite{YY25}. See for example \cite{Gyarmati01, Shparlinski23, YY25} for further discussions. Recall that if $q$ is an odd prime power, we say $A \subset \F_q^*$ is a \emph{$k$-Diophantine tuple over $\F_q$} if $a_1a_2\cdots a_k+1$ is a square in $\F_q$ for any distinct $a_1,\dots,a_k\in A$. If $A \subset \F_q^*$ is a \emph{$k$-Diophantine tuple over $\F_q$} with the additional property that $a_1\cdots a_k=-1$ for any distinct $a_1,\dots,a_k\in A$, then $A$ satisfies $(\star)$ in \cref{defn}. Conversely, if $A$ satisfies $(\star)$ in \cref{defn}, then for any distinct $a_1,\dots,a_k\in A$, we have 
$a_1\cdots a_k\notin h(\F_q)$, and thus $a_1a_2\cdots a_k+1$ is a square in $\F_q^*$; thus $A$ is necessarily a $k$-Diophantine tuple over $\F_q$.

In the spirit of \cref{conj:V}, our goal is to determine an asymptotic formula for $F_k(q;h)$, under the natural assumption that $q$ is large compared to $k$ and the degree of $h$. 
To state our main theorem, we define one more terminology. Let $k,n$ be positive integers with $k\geq 2$. 
For a subset $B\subset\Z_n$, its \emph{$k$-fold sumset} is 
$$kB:=\{b_1+\cdots+b_k: b_i\in B \text{ for } 1\leq i \leq k\}.$$ For each $s\in \Z_n$, we define
\[
m(k,n;s):=\max\big\{|B|: B\subset\Z_n,\ s\notin kB\big\},
\]
that is, $m(k,n;s)$ is the maximum size of a subset $B$ of $\Z_n$ such that the $k$-fold sumset $kB$ avoids $s$.
Observe that if $\lambda$ is a unit in $\Z_n$, that is, $\gcd(\lambda,n)=1$, then $m(k,n;s)=m(k,n;\lambda s)$. 
We now state our main theorem.

\begin{theorem}\label{thm:main}
Let $k\geq 2$ be an integer, $q$ be a prime power, and $h\in \F_q[x]$ be a non-constant polynomial. 
Let $\ell$ be the largest positive integer such that $h=Cf^{\ell}$ for some $C\in \F_q^*$ and $f\in \F_q[x]$. Let $n=\gcd(\ell, q-1)$ and let $H$ be the subgroup of $\F_q^*$ of index $n$. Let $g$ be a generator of $\F_q^*$ and let $s$ be the unique integer in $\{0,1,\ldots, n-1\}$ such that $C\in g^sH$. Then
    \[F_k(q;h)=\frac{m(k,n;s)}{n}\cdot q+O(\sqrt{q}),\]
where the implied constant of the error term depends only on $k$ and the degree of $h$.
\end{theorem}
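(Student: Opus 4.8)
The plan is to establish matching lower and upper bounds, $F_k(q;h)\ge\tfrac{m(k,n;s)}{n}q+O(1)$ and $F_k(q;h)\le\tfrac{m(k,n;s)}{n}q+O_{k,\deg h}(\sqrt q)$. Write $h=Cf^{\ell}$ with $\ell$ maximal as in the statement. Then $f$ is nonconstant (else $h$ is constant) and $\ell\le\deg h$, so $n=\gcd(\ell,q-1)\le\deg h$ is a bounded constant; moreover maximality of $\ell$ forces $f$ not to be a constant multiple of a perfect $e$-th power of a polynomial for any $e\ge 2$ (otherwise $h=C'(\cdot)^{e\ell}$ with $e\ell>\ell$), which is exactly the hypothesis needed to apply Weil's bound to character sums $\sum_{t\in\F_q}\eta(f(t))$ with $\eta$ a nontrivial multiplicative character. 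The $n$ characters of $\F_q^{*}$ of order dividing $n$ are precisely those trivial on $H$; writing them $\psi^{u}$ ($0\le u<n$, $\psi$ of order $n$) and $\zeta:=\psi(g)$, a primitive $n$-th root of unity, we have $\psi(C)=\zeta^{s}$ and $\psi^{u}$ is constant with value $\zeta^{ui}$ on $g^{i}H$. Since $n\mid\ell$ and $\gcd(\ell,q-1)=n$, for $f(t)\ne0$ we have $f(t)^{\ell}=(f(t)^{\ell/n})^{n}\in H$, hence $h(t)\in CH=g^{s}H$; thus $h(\F_q)\cap\F_q^{*}\subseteq g^{s}H$ and $h(\F_q)\cap\F_q^{*}\ne\emptyset$ for $q$ large. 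For the lower bound, pick $B\subseteq\Z_n$ with $|B|=m(k,n;s)$ and $s\notin kB$, and set $A=\bigcup_{i\in B}g^{i}H$, so $|A|=\tfrac{m(k,n;s)}{n}(q-1)$; for distinct $a_1,\dots,a_k\in A$ with $a_j\in g^{i_j}H$ ($i_j\in B$) the product $a_1\cdots a_k$ lies in $g^{i_1+\cdots+i_k}H$ with $i_1+\cdots+i_k\in kB$, hence is a nonzero element outside $g^{s}H$ and so outside $h(\F_q)$; thus $(\star)$ holds.

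For the upper bound let $A$ satisfy $(\star)$, put $a_i:=|A\cap g^{i}H|$, fix $T:=\big\lceil\tfrac{3\deg h}{n}\sqrt q\,\big\rceil$ (any large enough constant multiple of $\sqrt q$, depending on $k$ and $\deg h$, works), and set $B:=\{i\in\Z_n:a_i\ge T\}$. If $s\notin kB$, then $|B|\le m(k,n;s)$, and since $a_i\le\tfrac{q-1}{n}$ always and $a_i<T$ for $i\notin B$,
\[
|A|=\sum_i a_i\le|B|\,\tfrac{q-1}{n}+nT\le m(k,n;s)\,\tfrac{q-1}{n}+nT=\tfrac{m(k,n;s)}{n}q+O_{k,\deg h}(\sqrt q),
\]
which with the lower bound proves the theorem (for the $O_{\deg h}(1)$ many small $q$ with $T\ge\tfrac{q-1}{n}$ the bound is trivial as $F_k(q;h)\le q$). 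So everything reduces to showing $s\notin kB$.

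Suppose not: $i_1+\cdots+i_k\equiv s\pmod n$ for some $i_1,\dots,i_k\in B$. I claim $(\star)$ then fails, by proving that
\[
N:=\#\big\{(t,a_1,\dots,a_k):t\in\F_q,\ a_j\in A\cap g^{i_j}H,\ a_1,\dots,a_k\ \text{distinct},\ a_1\cdots a_k=h(t)\big\}>0.
\]
Detecting $a_1\cdots a_k=h(t)$ with multiplicative characters gives $N=\tfrac{1}{q-1}\sum_{\chi}W(\chi)\prod_{j=1}^{k}S_j(\chi)-R$, where $S_j(\chi)=\sum_{a\in A\cap g^{i_j}H}\chi(a)$, $W(\chi)=\sum_{t\in\F_q}\overline\chi(h(t))$ (with $\overline\chi(0):=0$), and $R\ge0$ collects the tuples with a repeated coordinate. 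For $\chi=\psi^{u}$: $\chi^{\ell}=1$ gives $W(\psi^{u})=\overline{\psi^{u}}(C)(q-r)=\zeta^{-su}(q-r)$ with $r=\#\{t:f(t)=0\}\le\deg f$, while $S_j(\psi^{u})=a_{i_j}\zeta^{ui_j}$, so $\prod_j S_j(\psi^{u})=(\prod_j a_{i_j})\zeta^{us}$; the phases cancel and $\sum_{u=0}^{n-1}W(\psi^{u})\prod_j S_j(\psi^{u})=n(q-r)\prod_j a_{i_j}$. For the remaining $\chi$ one has $\chi^{\ell}\ne1$, so Weil's bound yields $|W(\chi)|\le(\deg f)\sqrt q$, and by Hölder together with Parseval ($\sum_\chi|S_j(\chi)|^{k}\le a_{i_j}^{k-2}\sum_\chi|S_j(\chi)|^{2}=(q-1)a_{i_j}^{k-1}$),
\[
\Big|\sum_{\chi^{\ell}\ne1}W(\chi)\prod_j S_j(\chi)\Big|\le(\deg f)\sqrt q\sum_{\chi}\prod_j|S_j(\chi)|\le(\deg f)\sqrt q\,(q-1)\Big(\textstyle\prod_j a_{i_j}\Big)^{(k-1)/k}.
\]
Finally $R\le(\deg h)\binom{k}{2}\big(\prod_j a_{i_j}\big)/T$ (bound the repeats, then sum over $t$ losing a factor $\deg h$ from the fibres of $h$, and use $\min_j a_{i_j}\ge T$). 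Writing $P:=\prod_j a_{i_j}\ge T^{k}$ and using $P^{(k-1)/k}\le P/T$ and $\tfrac{n(q-r)}{q-1}\ge\tfrac n2$ for $q\ge 2\deg h$,
\[
N\ \ge\ \Big(\tfrac{n(q-r)}{q-1}-\tfrac{(\deg f)\sqrt q+(\deg h)\binom{k}{2}}{T}\Big)P\ \ge\ \Big(\tfrac n2-\tfrac{(\deg f)\sqrt q+O_{k,\deg h}(1)}{T}\Big)P\ >\ 0
\]
by the choice of $T$. Hence there exist distinct $a_1,\dots,a_k\in A$ with $a_1\cdots a_k=h(t)$ for some $t$, contradicting $(\star)$; so $s\notin kB$, and the theorem follows.

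The crux — and the point on which the argument turns — is keeping the error in this point count below the main term while spending only a threshold $T$ of size $\Theta(\sqrt q)$. This succeeds because the indices $i_1,\dots,i_k$ are held \emph{fixed}: the main term is $\asymp q\prod_j a_{i_j}$ and the character error is $\asymp\sqrt q\,(\prod_j a_{i_j})^{(k-1)/k}$, so their ratio exceeds $(\prod_j a_{i_j})^{1/k}/\sqrt q\ge T/\sqrt q$ regardless of any additive structure in the coset profile $(a_i)$. A cruder global count over all index tuples would instead produce the convolution $(a\ast\cdots\ast a)(s)$ in the main term, which can be far smaller than $\prod_j a_{i_j}$, and would force $T$ as large as $q^{1-1/(2k)}$, yielding only an $O(q^{1-1/(2k)})$ error. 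I expect the remaining points to be routine bookkeeping: confirming that $\ell$-maximality supplies exactly the hypothesis Weil's bound needs, and that the distinctness correction $R$ is genuinely lower order; if any difficulty arises it will be there rather than in the main line.
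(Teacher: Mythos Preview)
Your argument is correct and shares the same global architecture as the paper's: the lower bound via $A=\bigcup_{i\in B_0}g^iH$ is identical, and for the upper bound both proofs partition $A$ into cosets, set $B=\{i:|A\cap g^iH|\ge \Theta(\sqrt q)\}$, and reduce everything to the claim $s\notin kB$.

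Where the two diverge is in proving that claim. The paper first reparametrises each $A_i$ as $\{g^ix^{\ell}:x\in B_i\}$, manufactures a shift $g^{t'}$ so that $g^{i_1+\cdots+i_k}=C(g^{t'})^{\ell}$, and then invokes a separately proved Gyarmati-type lemma (their Lemma~2.2/Corollary~2.3) to find distinct $x_j\in B_{i_j}$ with $x_1\cdots x_k=f(x_0)/g^{t'}$; the $k$-set case is reduced to $k=2$ by fixing $k-2$ coordinates and applying Cauchy--Schwarz plus Weil on the remaining pair. You instead run a single direct character count on $h$ itself: splitting $\widehat{\F_q^*}$ into the $n$ characters trivial on $H$ (where $\chi^{\ell}=1$ collapses $W(\chi)$ to $\overline\chi(C)(q-r)$ and the congruence $\sum i_j\equiv s$ is exactly the phase cancellation giving the main term $n(q-r)P$) versus the rest (where $\chi^{\ell}\neq1$ and Weil applies to $\overline\chi^{\ell}\circ f$), bounding the latter by H\"older plus Parseval for all $k$ at once, and handling distinctness by a crude subtracted overcount $R=O_{k,\deg h}(P/T)$.

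Your route is somewhat more streamlined: it avoids the $B_i$ reparametrisation and the $g^{t'}$ bookkeeping, treats general $k$ uniformly rather than bootstrapping from $k=2$, and makes the mechanism (the congruence $\sum i_j\equiv s$ is precisely what aligns the phases of the $n$ ``main'' characters) completely transparent. The paper's route, on the other hand, isolates a reusable black-box statement (Corollary~2.3: large sets $A_1,\dots,A_k$ always hit $f(\F_q)$ with distinct representatives) that may be of independent interest, and its reduction to two variables keeps the analytic input at the Cauchy--Schwarz level. Both yield the same $O_{k,\deg h}(\sqrt q)$ error and the same structural description of near-extremal $A$.
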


The proof of our main theorem is given in \cref{sec:proof}. In fact, our proof shows a stronger conclusion: if $m(k,n;s)>0$, then extremal sets are essentially unions of $m(k,n;s)$ cosets of $H$. We refer to \cref{rem:structure} for a precise statement. 

We make a few remarks on the theorem below. 

\begin{remark}\label{rmk:rmk}\
\begin{enumerate}
    \item The main term in \cref{thm:main} is independent of the chosen factorization of $h$ and of the chosen generator $g$. 
    
    Indeed, suppose we have two factorizations $h=Cf^{\ell}=\widetilde{C}\widetilde{f}^{\ell}$, and two generators $g$ and $\widetilde{g}$. Let $s,s'\in \{0,1,\ldots, n-1\}$ such that $C\in g^s H$ and $\widetilde{C} \in \widetilde{g}^{\widetilde{s}}H$.     Then $C$ and $\widetilde{C}$ are in the same coset of $H$, and thus $g^s H=\widetilde{g}^{\widetilde{s}}H$.
Assume that $\widetilde{g}=g^u$, where $\gcd(u,q-1)=1$. It follows $g^s/\widetilde{g}^{\widetilde{s}}=g^{s-u\widetilde{s}}\in H$, that is, $s\equiv u\widetilde{s} \pmod n$. Since $\gcd(u,n)=1$, we have $m(k,n;s)=m(k,n;s')$ by the observation before the theorem.

 \item When $m(k,n;s)>0$, \cref{thm:main} yields
    \[ F_k(q;h)=\frac{m(k,n;s)}{n} \cdot q+O(\sqrt q),\]
    so in this case $F_k(q;h)$ has a linear main term in $q$. In particular, this confirms a finite-field analogue of Verstra\"ete's conjecture (\cref{conj:V}).
    
    The condition $m(k,n;s)>0$ can be characterized by the following equivalence: 
    \[n \mid k \text{ and }s=0 \qquad \Leftrightarrow \qquad m(k,n;s)=0.\]
    Suppose that $n \mid k$ and $s=0$, $B$ is nonempty, let $b\in B$. Then $kb \in kB$. Since $n \mid k$, $kb \equiv 0\pmod n$, so $0 \in kB$, a contradiction. Thus $B=\varnothing$, so $m(k,n;0)=0$. Conversely, we prove the contrapositive. Assume that $n\nmid k$ or $s\neq 0$. We claim that $m(k,n;s)>0$. If $s\neq 0$, take $B=\{0\}$. Then $kB=\{0\}$, so $s\notin kB$, and hence $m(k,n;s)\ge 1$. Now assume that $s=0$ and $n\nmid k$. Take $B=\{1\}$. Then $kB=\{k\}$, and since $n\nmid k$ we have $k\not\equiv 0\pmod n$, so $0\notin kB$. Thus $m(k,n;0)\ge 1$. Therefore, in either case, we have $m(k,n;s)>0$. 
    
    \item When $m(k,n;s)=0$, \cref{thm:main} implies that $F_k(q;h)=O(\sqrt q)$. 
    In fact, the $O(\sqrt q)$ bound can sometimes be sharp by considering the following example. 
    
    Let $q=p^2$ with $p$ odd, and consider the quadratic polynomial $h(x)=\alpha x^2 -1 \in \F_{q}[x]$ for some nonsquare $\alpha\in \F_{q}^*$. Since $\ell=1$, $n=\gcd(\ell,q-1)=1$. Thus, $m(k,1;0)=0$.
    
    Let $u$ be a generator of $\F_p^*$, so that $\F_p^*=\langle u\rangle$ has order $p-1$ and $-1=u^{(p-1)/2}$. 
    Set
    \[
    t:=\left\lfloor \frac{p-1}{2k}\right\rfloor+1 \qquad \text{and} \qquad
    A:=\{1,u,\dots,u^{t-1}\}\subset \F_p^*\subset \F_{p^2}^*.
    \]
    Then $|A|=t=\Theta(p)=\Theta(\sqrt q)$.
    Moreover, for any $k$ distinct elements $a_1,\dots,a_k\in A$, denoting $a_j=u^{b_j}$ with $b_j\in\{0,1,\dots,t-1\}$, we have $a_1\cdots a_k=u^{b_1+\cdots+b_k}$. 
    Since the exponents $b_1,\ldots,b_k$ are distinct and lie in $\{0,1,\ldots, t-1\}$, for $k \ge 2$, 
    \[b_1+\cdots+b_k \le (t-1) + \cdots + (t-k)=k(t-1)-\frac{k(k-1)}{2}<\frac{p-1}{2}. \]
    and so $b_1+\cdots+b_k\not \equiv (p-1)/2 \pmod{p-1}$.
    Thus, we have $a_1\cdots a_k\neq -1$.
     
    Note that $\F_p^*\subset (\F_{p^2}^*)^2$.
    Indeed, $\F_p^*=\left<g^{p+1} \right>$ for some generator $g$ of $\F_{p^2}^*$ since the order of $g^{p+1}$ is $p-1$. Since $p+1$ is even, the generator $g^{p+1}$ of $\F_p^*$ is a square in $\F_{p^2}^*$. 
    Moreover, since $\F_p$ is a subfield of $\F_{p^2}$, for any distinct $a_1,\ldots,a_k \in A$, we have $a_1\cdots a_k+1 \subset \F_p\subset (\F_{p^2}^*)^2 \cup \{0\}$. Thus, for any $x \ne 0$, we have $a_1\cdots a_k +1 \ne \alpha x^2$.
    Together with $a_1\cdots a_k\ne -1$, which rules out the case $x=0$, we conclude that $a_1\cdots a_k \ne h(x)$ for all $x\in \F_{p^2}$. It follows that $F_k(q;h)=\Theta( \sqrt q)$.
   
   In general, it seems challenging to determine the asymptotic behavior of $F_k(q;h)$, and we believe that establishing the full analogue of \cref{conj:V} over finite fields is completely out of reach. As an example, consider the same $h$ as above, where $q$ is an odd prime power but not necessarily a square. As discussed in the introduction, in this case, $F_k(q;h)$ is upper bounded by the maximum size of a $k$-Diophantine tuple over $\F_q$. However, the best-known general lower bound on the latter quantity is $\Omega(\log q)$, while the best-known upper bound is $O(\sqrt{q})$; see \cite{YY25} and the references therein.    
    \item  The remarks above show that $F_k(q,h)$ could be of the order $\Theta(q)$ or $\Theta(\sqrt{q})$, but cannot be of the order $\Theta(q^{\theta})$ for $1/2<\theta<1$. A natural question arises: is it possible to construct $h$ and $q$ such that $F_k(q,h)$ is $\Theta(q^{\theta})$ for some $0<\theta<1/2$? We conjecture that the answer is positive if $\theta=1/m$ for each positive integer $m\geq 3$, but we are not able to prove it. Nevertheless, below, we suggest a plausible construction. 

    Let $m\geq 3$ be a positive integer. Let $q=p^m$ with $p$ a prime such that $p \equiv 1 \pmod m$. Note that $\F_p$ is the largest subfield of $\F_q$ contained in $(\F_q)^m$. To see that, if $\F_{p^r}$ is a subfield of $\F_q$ such that it is contained in $(\F_q)^m$, then we must have $r \mid m$ and $m \mid \frac{q-1}{p^r-1}$; however, since $p\equiv 1 \pmod m$, we have
    $$
    \frac{q-1}{p^r-1}=\sum_{j=0}^{m/r-1} p^{jr} \equiv \frac{m}{r} \pmod m,
    $$
    that is, $r=1$. 

    Consider the polynomial $h(x)=\alpha x^m -1 \in \F_{q}[x]$ for some $\alpha\in \F_{q} \setminus (\F_q)^m$.   
    Since $\ell=1$, $n=\gcd(\ell,q-1)=1$. Thus, $m(k,1;0)=0$.
    By a similar argument as above in (3), we can pick $A \subset \F_p$ with $|A|\geq p/2k$ so that for any distinct $a_1,a_2,\ldots, a_k\in A$, we have $a_1a_2\cdots a_k \neq h(x)$ for any $x\in \F_q$. This shows that    
    $F_k(q;h)=\Omega(q^{1/m})$. We suspect that in this setting, we actually have $F_k(q;h)=\Theta(q^{1/m})$.
\end{enumerate}
\end{remark}

Finally, in \cref{sec: sec4}, we estimate $m(k,n;s)$ for each $s\in \Z_n$, where $k\ge2$ and $n$ are positive integers. In particular, when $\gcd(k,n)=1$, we determine $m(k,n;s)$ for every $s\in \Z_n$.

\section{Proof of \cref{thm:main}}\label{sec:proof}

In this section, we prove \cref{thm:main}.  
We first recall Weil's bound for multiplicative character sums; see, for example \cite[Theorem 5.41]{LN97}.

\begin{lemma}[Weil's bound]\label{Weil}
Let $\chi$ be a multiplicative character of $\F_q$ of order $d>1$, and let $f \in \F_q[x]$ be a monic polynomial of positive degree that is not a $d$-th power of a polynomial. 
Let $n$ be the number of distinct roots of $f$ in its
splitting field over $\F_q$. Then for any $a \in \F_q$,
$$\bigg |\sum_{x\in\mathbb{F}_q}\chi\big(af(x)\big) \bigg|\le(n-1)\sqrt q .$$
\end{lemma}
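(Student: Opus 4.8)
The statement is a form of Weil's theorem, and the plan is to deduce it from the Riemann Hypothesis for curves over finite fields, which I take as the deep external input. A multiplicative character of order $d>1$ exists only if $d\mid q-1$, so I may assume this; I use the convention $\chi(0)=0$, and the case $a=0$ is trivial (the sum is $0$), so assume $a\in\F_q^*$. The two hypotheses on $f$ enter as follows: since $f$ is monic and is not a $d$-th power of a polynomial, $af$ cannot equal a constant times a $d$-th power of a polynomial, which is precisely the condition that $x\mapsto\chi(af(x))$ is a nontrivial character of the rational function field $\F_q(x)$ (if $af=c\,g^d$, the summand would be the constant $\chi(c)$ off the zeros of $g$, and the sum would be of size $\approx q$).

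First I would pass from the single sum to the whole family over extensions. Writing $\chi^{(m)}=\chi\circ N_{\F_{q^m}/\F_q}$ for the lift of $\chi$ and
$$S_m:=\sum_{x\in\F_{q^m}}\chi^{(m)}\!\big(af(x)\big),\qquad S_1=\sum_{x\in\F_q}\chi\big(af(x)\big),$$
I attach the $L$-function
$$L(T):=\exp\!\left(\sum_{m\ge1}S_m\,\frac{T^m}{m}\right).$$
The bridge to geometry is the Kummer curve $C\colon y^d=af(x)$: counting its $\F_{q^m}$-points via the identity $\#\{y:y^d=c\}=\sum_{j=0}^{d-1}(\chi^{(m)})^j(c)$ for $c\neq0$ shows that the twisted sums for $j=0,1,\dots,d-1$ assemble into the point counts of $C$, so that $L(T)$ appears as a factor of the numerator of the zeta function $Z(C,T)$. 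Rationality of $Z(C,T)$ then forces $L(T)$ to be a polynomial with $L(0)=1$; the nontriviality secured above guarantees that the ``trivial'' cohomological pieces ($H^0_c$ and $H^2_c$) vanish, so no spurious factor of weight $0$ or $2$ survives.

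It remains to bound the degree of $L(T)$ and the size of its inverse roots. Writing $L(T)=\prod_{i=1}^{D}(1-\omega_i T)$ and comparing logarithmic derivatives with the defining exponential gives $S_m=-\sum_{i=1}^{D}\omega_i^m$, so in particular $S_1=-\sum_{i=1}^{D}\omega_i$. An Euler-characteristic computation (Grothendieck--Ogg--Shafarevich, or equivalently Riemann--Hurwitz for the degree-$d$ cover $C\to\mathbb P^1$, which is tame since $d\mid q-1$) shows that the ramification is concentrated over the $n$ distinct roots of $f$ together with the point at infinity, and yields $D\le n-1$. Finally, the Riemann Hypothesis for curves gives $|\omega_i|\le\sqrt q$ for every $i$, whence
$$|S_1|=\Big|\sum_{i=1}^{D}\omega_i\Big|\le D\sqrt q\le(n-1)\sqrt q.$$

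The main obstacle is the Riemann Hypothesis for curves itself (Weil's theorem); granting it, the only remaining care is the degree bound $D\le n-1$, where one checks that the cover is tame and that the point at infinity contributes at most one to the ramification count. As an alternative that sidesteps algebraic geometry, one can instead run the elementary Stepanov--Bombieri method: build an auxiliary polynomial vanishing to high order at those $x$ for which $\chi(af(x))$ takes a prescribed value, and play the degree of this polynomial against the number of such $x$. This route is self-contained but substantially more computational, so I would present the zeta-function argument above as the cleaner plan.
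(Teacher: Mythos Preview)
The paper does not prove this lemma at all: it is stated as a known result and simply cited from Lidl--Niederreiter \cite[Theorem 5.41]{LN97}. Your sketch via the $L$-function of the Kummer cover $y^d=af(x)$, rationality of the zeta function, the Euler-characteristic/Riemann--Hurwitz bound $D\le n-1$, and the Riemann Hypothesis for curves is the standard route and is broadly correct, but it goes well beyond what the paper does. For the purposes of this paper you should just quote the result with a reference, as the authors do; if you want to include a proof, be aware that the delicate step is precisely the one you flag---showing $\deg L(T)\le n-1$ requires a careful accounting of the behavior at infinity and at the zeros of $f$ (and one must check that the nontriviality hypothesis kills the weight-$0$ and weight-$2$ pieces), which you have only asserted.
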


The following lemma is a generalization of \cite[Theorem 10]{Gyarmati01} proved by Gyarmati, where she established a similar result over prime fields. We include a short proof for the sake of completeness.

\begin{lemma}\label{lem: Gyarmati} 
Let $f(x)\in\F_q[x]$ be a polynomial of degree $1<m<q$.
Assume that for every integer $d>1$ with $d\mid(q-1)$, the polynomial $f$ is not a constant multiple of a $d$-th power in $\F_q[x]$.
Let $A,B\subset \F_q^*$ and assume that
\[
|A||B| > q\Big(\frac{q-1}{q-m}\Big)^2 (m-1)^2.
\]
Then there exist $a\in A$, $b\in B$, and $x\in\F_q$ such that $ab=f(x)$.
\end{lemma}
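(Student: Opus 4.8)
The plan is to count, via multiplicative character sums, the number $T$ of triples $(a,b,x)\in A\times B\times\F_q$ with $ab=f(x)$, and to show $T>0$ under the stated hypothesis. Since $ab\in\F_q^*$ for $a\in A$, $b\in B$, orthogonality of the multiplicative characters of $\F_q$ gives
\[
T=\frac{1}{q-1}\sum_{\chi}\Big(\sum_{x\in\F_q}\chi\big(f(x)\big)\Big)\Big(\sum_{a\in A}\overline\chi(a)\Big)\Big(\sum_{b\in B}\overline\chi(b)\Big),
\]
where $\chi$ runs over all multiplicative characters of $\F_q$ (with the convention $\chi(0)=0$). I would isolate the contribution of the principal character $\chi_0$: here $\sum_{x}\chi_0(f(x))$ counts the $x$ with $f(x)\ne0$, which is at least $q-m$ since $f$ has at most $m=\deg f$ roots in $\F_q$, while $\sum_{a\in A}\chi_0(a)=|A|$ and $\sum_{b\in B}\chi_0(b)=|B|$; thus the principal term is at least $\tfrac{(q-m)|A||B|}{q-1}$.

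Next I would bound each non-principal term. Writing $f=c_0 f_0$ with $f_0\in\F_q[x]$ monic of degree $m$, if $f_0$ were the $d$-th power of a polynomial for $d=\ord\chi$ then $f$ would be a constant multiple of a $d$-th power, contradicting the hypothesis (note $d>1$ and $d\mid q-1$); hence \cref{Weil} applies to $f_0$ and gives $\big|\sum_{x}\chi(f(x))\big|=\big|\sum_{x}\chi(f_0(x))\big|\le(m-1)\sqrt q$ for every $\chi\ne\chi_0$. To handle the resulting sum $\sum_{\chi\ne\chi_0}\big|\sum_x\chi(f(x))\big|\big|\sum_a\overline\chi(a)\big|\big|\sum_b\overline\chi(b)\big|$, I would pull out the uniform bound $(m-1)\sqrt q$, apply the Cauchy--Schwarz inequality over $\chi$, and use the orthogonality identity $\sum_{\chi}\big|\sum_{a\in A}\overline\chi(a)\big|^2=(q-1)|A|$ (and its analogue for $B$) to bound the off-diagonal contribution to $(q-1)T$ by $(m-1)\sqrt q\cdot(q-1)\sqrt{|A||B|}$.

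Combining the two estimates yields $T\ge\tfrac{(q-m)|A||B|}{q-1}-(m-1)\sqrt q\,\sqrt{|A||B|}$, and this lower bound is strictly positive exactly when $|A||B|>q\big(\tfrac{q-1}{q-m}\big)^2(m-1)^2$, which is precisely the hypothesis; hence $T>0$, and a triple $(a,b,x)$ with $ab=f(x)$ exists. The argument is essentially routine character-sum bookkeeping; the only points needing care are the convention $\chi(0)=0$ together with the at-most-$m$ roots of $f$ in the principal term, and the reduction to the monic polynomial $f_0$ required to invoke Weil's bound---one must check that ``$f$ is not a constant multiple of a $d$-th power'' passes to ``$f_0$ is not a $d$-th power of a polynomial'' for $d=\ord\chi$. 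I do not anticipate a genuine obstacle here.
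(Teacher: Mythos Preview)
Your proposal is correct and follows essentially the same approach as the paper: both count triples via orthogonality of multiplicative characters, isolate the principal-character contribution (bounded below using $f$ having at most $m$ roots), and control the non-principal terms by Weil's bound combined with Cauchy--Schwarz and the Parseval identity $\sum_\chi\big|\sum_{a\in A}\chi(a)\big|^2=(q-1)|A|$. Your explicit reduction to the monic polynomial $f_0$ before invoking \cref{Weil} is a point the paper handles more implicitly via the constant $a$ in the statement of \cref{Weil}, but the arguments are otherwise the same.
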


\begin{proof}
Define
\[
N:=\#\{(a,b,x)\in A\times B\times \F_q:\ ab=f(x)\}.
\]
Let $\chi_0$ be the trivial character.
We extend every (multiplicative) character $\chi$ on $\F_q^*$ to $\F_q$ by setting $\chi(0)=0$.
Note that, for $y \in \F_q^*$, $\mathbf{1}_{\{y=1\}}=\frac{1}{q-1}\sum_{\chi \in \widehat{\F_q^*}}\chi(y)$, where $\widehat{\F_q^*}$ is the set of  characters on $\F_q^*$. 
Then, we have
\[N=\sum_{a \in A}\sum_{b \in B}\sum_{x \in \F_q}\mathbf{1}_{\{ab=f(x)\}}=\frac{1}{q-1}\sum_{a \in A}\sum_{b \in B}\sum_{x \in \F_q}\sum_{\chi \in \widehat{\F_q^*}}\chi(a^{-1}b^{-1}f(x)).\]
Let $r$ be the number of distinct $\F_q$-roots of $f(x)$.
Then
\begin{equation}\label{eq:N}   
(q-1)N-(q-r)|A||B|=\sum_{a \in A}\sum_{b \in B}\sum_{x \in \F_q}\sum_{\chi\ne \chi_0 }\chi(a^{-1}b^{-1}f(x)).
\end{equation}
By the orthogonality relation,  
\[\sum_{\chi \in \widehat{\F_q^*}}\Big|\sum_{a\in  A}\chi(a^{-1})\Big |^2=\sum_{\chi \in \widehat{\F_q^*}}\sum_{a,\widetilde{a}\in A}\chi(a^{-1})\overline{\chi(\widetilde{a}^{-1})}
=\sum_{a,\widetilde{a}\in A}\sum_{\chi \in \widehat{\F_q^*}}\chi(a^{-1}\widetilde{a})
=(q-1)|A|.
\]
It holds similarly for $B$.
This, together with the Cauchy--Schwarz inequality and Weil's bound, implies that
\begin{align*}
\left| \sum_{a \in A}\sum_{b \in B}\sum_{x \in \F_q}\sum_{\chi\ne \chi_0 }\chi(a^{-1}b^{-1}f(x))\right|
&\leq
\sum_{\chi\ne\chi_0} 
\left|\sum_{a\in A}\chi(a^{-1})\sum_{b\in B}\chi(b^{-1})\right|
\left|\sum_{x\in\F_q}\chi(f(x))\right| \\ 
&  \le \sqrt{\sum_{\chi \ne \chi_0}\Big|\sum_{a \in A}\chi (a^{-1})\Big|^2}\sqrt{\sum_{\chi \ne \chi_0}\Big|\sum_{b \in B}\chi(b^{-1})\Big|^2}(m-1)\sqrt{q}\\
& \le (m-1)(q-1)\sqrt{q|A||B|}.
\end{align*}
Thus equation~\eqref{eq:N} implies that
\[\left|(q-1)N-(q-r)|A||B|\right| \le (m-1)(q-1)\sqrt{q|A||B|}.\]
Since $q-r\ge q-m$, we have
\[(q-1)N \ge (q-m)|A||B| - (m-1)(q-1)\sqrt{q|A||B|}.\]
Since 
\[
|A||B|> q\Big(\frac{q-1}{q-m}\Big)^2 (m-1)^2,
\]
we have $N>0$, as required.
\end{proof}

Using \cref{lem: Gyarmati}, we have the following corollary.

\begin{corollary}\label{cor: h} 
Let $f(x)\in\F_q[x]$ be a polynomial of degree $1\le m<q$.
Assume that $f(x)\in \F_q[x]$ is not a constant multiple of a $d$-th power of a polynomial in $\F_q[x]$ for any divisor $d$ of $(q-1)$ with $d\geq 2$. Let $A_1, A_2,\ldots, A_k$ be subsets of $\F_q^*$ (not necessarily distinct).
If $|A_i| \ge 8C_m\sqrt q + 2k+2$ for each $1\le i\le k$, 
where 
\[
C_m:= (m-1)\frac{q-1}{q-m},
\]
then there exist distinct elements $a_1,a_2,\ldots, a_k$ such that $a_i \in A_i$ for $1\leq i \leq k$, and $$a_1\cdots a_k=f(x_0)$$ for some $x_0 \in \F_q$. 
\end{corollary}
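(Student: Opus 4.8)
The plan is to fix all but two of the $k$ factors, thereby reducing to the two‑variable \cref{lem: Gyarmati}, and then to deal with the one genuine difficulty: \cref{lem: Gyarmati} only hands us \emph{some} solution pair, which might be ``diagonal'' (the two chosen elements equal). To rule this out I would not use the bare statement of \cref{lem: Gyarmati} but rather the quantitative count that its proof produces, and then discard the diagonal solutions by a crude estimate.

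First I would dispose of the degenerate case $m=1$: then $f$ is a bijection of $\F_q$, so every element of $\F_q^*$ lies in $f(\F_q)$, and it suffices to pick a system of distinct representatives $a_1\in A_1,\dots,a_k\in A_k$, which exists since each $|A_i|\ge 2k+2>k$. So assume $m\ge 2$. I would greedily choose pairwise distinct $a_3\in A_3,\dots,a_k\in A_k$ (possible, as each $|A_i|$ vastly exceeds $k$), set $U:=\{a_3,\dots,a_k\}$ and $c:=(a_3\cdots a_k)^{-1}\in\F_q^*$, and put $g:=cf$. Since $g$ is a constant multiple of $f$, it is still not a constant multiple of a $d$‑th power for any $d\mid q-1$ with $d\ge 2$, and $\deg g=m$. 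It now suffices to find $a_1\in A_1\setminus U$ and $a_2\in A_2\setminus U$ with $a_1\ne a_2$ and $a_1a_2=g(x_0)$ for some $x_0\in\F_q$, since then $a_1a_2\cdots a_k=c^{-1}g(x_0)=f(x_0)$ and $a_1,\dots,a_k$ are pairwise distinct.

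Next, writing $A:=A_1\setminus U$ and $B:=A_2\setminus U$, the character‑sum computation in the proof of \cref{lem: Gyarmati} applies verbatim to $g$ and yields, beyond mere existence, the lower bound
\[
(q-1)\,N \;\ge\; (q-m)\,|A||B| - (m-1)(q-1)\sqrt{q\,|A||B|},
\]
where $N$ is the number of triples $(a_1,a_2,x)\in A\times B\times\F_q$ with $a_1a_2=g(x)$. The number of diagonal triples (those with $a_1=a_2$) is at most $m\min(|A|,|B|)\le m\sqrt{|A||B|}$, since for each $a\in A\cap B$ the equation $a^2=g(x)$ has at most $m=\deg g$ solutions in $x$. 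Hence it suffices to verify $N>m\sqrt{|A||B|}$, which after dividing through by $\sqrt{|A||B|}$ becomes
\[
\sqrt{|A||B|}\;>\;C_m\sqrt q+\frac{m(q-1)}{q-m}.
\]
To finish, I would note $|A|,|B|\ge |A_i|-(k-2)\ge 8C_m\sqrt q+k+4$, so $\sqrt{|A||B|}\ge 8C_m\sqrt q+k+4$, while $\dfrac{m(q-1)}{q-m}=\dfrac{m}{m-1}\,C_m\le 2C_m\le 2C_m\sqrt q$; thus the displayed inequality holds with room to spare, giving a non‑diagonal triple and hence the required distinct $a_1,\dots,a_k$.

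The only real obstacle is the distinctness bookkeeping in the previous paragraph: simply invoking \cref{lem: Gyarmati} after fixing $a_3,\dots,a_k$ could return $a_1=a_2$, so one must keep the quantitative count from the proof and subtract the diagonal solutions — everything else is routine, and the slack built into the threshold $8C_m\sqrt q+2k+2$ is generous enough to absorb both the deletion of $U$ and the lower‑order term $m(q-1)/(q-m)$.
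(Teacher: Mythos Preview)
Your argument is correct. The overall reduction---fix $k-2$ of the factors greedily, then solve a two-variable equation $a_1a_2=cf(x_0)$---is the same as in the paper. The genuine difference is in how you guarantee $a_1\ne a_2$. The paper handles this purely at the level of the \emph{statement} of \cref{lem: Gyarmati}: it first replaces the last two sets by disjoint halves $X_0\subset A_{k-1}\setminus U$ and $Y_0\subset A_k\setminus U$, so that any solution produced by the lemma is automatically off the diagonal; the factor-of-two loss from halving is absorbed by the factor $8$ in the threshold. You instead reopen the \emph{proof} of \cref{lem: Gyarmati} to extract the quantitative lower bound on $N$, then subtract the at most $m\sqrt{|A||B|}$ diagonal triples. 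Your route avoids the (easy but slightly fiddly) construction of disjoint halves and treats $k=2$ and $k\ge3$ uniformly; the paper's route keeps \cref{lem: Gyarmati} as a black box. Both fit comfortably within the stated hypothesis $|A_i|\ge 8C_m\sqrt q+2k+2$.
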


\begin{proof}
If $m=1$, then $f$ is a bijection on $\F_q$, thus $f(\F_q)=\F_q$.
We choose $a_1\in A_1$ arbitrarily, and for $2\le j\le k$ choose
$a_j\in A_j\setminus\{a_1,\dots,a_{j-1}\}$, which is possible since at step $j$ we exclude at most $j-1\le k-1$ elements.
Set $y:=a_1\cdots a_k\in \F_q^*$.
Since $f(\F_q)=\F_q$, there exists $x_0\in \F_q$ such that $f(x_0)=y$, as required. 

Next, assume that $m\geq 2$. We consider two cases. 

\textbf{Case 1:} $k=2$. Choose disjoint subsets $X'\subset A_1$ and $Y'\subset A_2$
with \[
|X'| \ge \Big\lfloor\frac{|A_1|}{2}\Big\rfloor
\qquad \text{and} \qquad
|Y'| \ge \Big\lceil\frac{|A_2|}{2} \Big\rceil.
\]
Then $X'\cap Y'=\varnothing$, and
\[
|X'||Y'| \ge \Big\lfloor \frac{|A_1|}{2} \Big \rfloor \cdot \Big\lceil \frac{|A_2|}{2} \Big\rceil
\ge \frac{|A_1|-1}{2} \cdot \frac{|A_2|}{2}
\ge \frac{(8C_m\sqrt q+2k+1)(8C_m\sqrt q +2k+2)}{4}.
\]
In particular, since $k\ge2$, the right-hand side is at least $16C_m^2q$, and so
\[
|X'||Y'| > 16C_m^2 q
 \ge  q\Big(\frac{q-1}{q-m}\Big)^2(m-1)^2.
\]
By applying \cref{lem: Gyarmati} to $f$ and $(X',Y')$, we have
$a_1\in X'$, $a_2\in Y'$, and $x_0\in\F_q$ such that $a_1a_2=f(x_0)$. In particular, $a_1\ne a_2$ since $X'\cap Y'=\varnothing$.

\smallskip

\textbf{Case 2:} $k\ge 3$. We choose $a_1\in A_1$ arbitrarily, and for $2\le j\le k-2$ choose
$a_j\in A_j\setminus\{a_1,\dots,a_{j-1}\}$, which is possible since at step $j$ we exclude at most $j-1\le k-3$ elements. Let $c:=(a_1a_2\cdots a_{k-2})^{-1}\in\F_q^*$ and define
\[
X:=A_{k-1}\setminus\{a_1,\dots,a_{k-2}\},\qquad
Y:=A_k\setminus\{a_1,\dots,a_{k-2}\}.
\]
Then, 
\[
|X| \ge |A_{k-1}|-(k-2)\ge 8C_m \sqrt q + k+2, \qquad
|Y| \ge |A_k|-(k-2)\ge 8C_m \sqrt q + k+2.
\]
Choose disjoint subsets $X_0\subset X$ and $Y_0\subset Y$ such that
\[
|X_0| \ge \Big\lfloor \frac{|X|}{2} \Big\rfloor
\qquad \text{and} \qquad
|Y_0|\ge \Big\lceil \frac{|Y|}{2} \Big\rceil.
\]
In particular, $|X_0|\ge (|X|-1)/2 > 4C_m\sqrt q$ and $|Y_0|\ge |Y|/2 > 4C_m\sqrt q$ and thus
\[
|X_0||Y_0| > 16C_m^2 q \ge q \Big(\frac{q-1}{q-m}\Big)^2(m-1)^2.
\]
Moreover, since $c\in\F_q^*$, the polynomial $cf$ is not a constant multiple of a $d$-th power for any $d\mid(q-1)$ with $d>1$.
Thus, by applying \cref{lem: Gyarmati} to $cf$ and $(X_0,Y_0)$, there exist $a_{k-1}\in X_0$, $a_k\in Y_0$, and $x_0\in\F_q$ such that $a_{k-1}a_k = c f(x_0)$.
Therefore, we have $a_1\cdots a_k=f(x_0)$ with $a_1,\dots,a_k$ distinct, as required.
\end{proof}

Now we are ready to present the proof of \cref{thm:main}.

\begin{proof}[Proof of \cref{thm:main}]
We first prove an upper bound for $F_k(q;h)$.
Let $A \subset \F_q^*$ such that $a_1\cdots a_k \ne Cf(x)^{\ell}$ for any distinct $a_1,\ldots,a_k \in A$ and for any $x \in \F_q$.
Then, we decompose 
\[
A=\bigcup_{i=0}^{n-1} A_i,
\]
where $A_i=A \cap g^i H$ for $0\leq i \leq n-1$. 

Note that $H=\left< g^n \right>=\left< g^\ell \right>=(\F_q^*)^\ell$ and $|H|=(q-1)/n$.
Therefore, for each $a\in A_i\subset g^iH$, we may write $a=g^i x^\ell$ for some $x\in\F_q^*$, and choosing one such $x$ for each $a\in A_i$ gives a set $B_i \subset \F_q^*$ with $|B_i|=|A_i|$ and $A_i=\{g^ix^\ell :x\in B_i\}$. Let $M:=M(k,m)$ be a constant such that \cref{cor: h} applies whenever
$|A_i|\ge M\sqrt q$ for all $i$ (for example, one may take $M=8C_m+2k+2$).
Define $$B=\{0\le i \le n-1 \colon |A_i|\ge M\sqrt{q}\} \subset \Z_n.$$
Next, we prove the following key claim.
\begin{claim}
$s\notin kB$.    
\end{claim}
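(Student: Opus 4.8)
The plan is to argue by contradiction. Suppose $s\in kB$, so that there exist $i_1,\dots,i_k\in B$ (with repetitions allowed) satisfying $i_1+\cdots+i_k\equiv s\pmod n$. I would then produce distinct $a_1,\dots,a_k\in A$ with $a_j\in A_{i_j}$ and $a_1\cdots a_k\in h(\F_q)$, contradicting the hypothesis that $A$ satisfies $(\star)$. Heuristically, each $a_j\in A_{i_j}\subset g^{i_j}H$ carries the coset label $i_j$, so the relation $\sum_j i_j\equiv s\pmod n$ forces the product $a_1\cdots a_k$ into the coset $g^sH=CH$ — precisely the coset in which the nonzero values of $h=Cf^\ell$ lie.

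The first ingredient is a short piece of coset arithmetic. Since $H=\langle g^n\rangle=\langle g^\ell\rangle=(\F_q^*)^\ell$ and $C\in g^sH$, the quotient $g^s/C$ lies in $H$; moreover $g^{\,i_1+\cdots+i_k-s}\in\langle g^n\rangle=H$ because $n\mid(i_1+\cdots+i_k-s)$. Multiplying, $\lambda:=g^{\,i_1+\cdots+i_k}/C$ is an $\ell$-th power, say $\lambda=\mu^\ell$ with $\mu\in\F_q^*$; this $\mu$ is the correction factor needed to recognize the target product as a value of $h$.

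Next I would invoke \cref{cor: h} applied to the (possibly repeated) sets $B_{i_1},\dots,B_{i_k}$ and the polynomial $\widetilde f:=\mu^{-1}f$, of degree $m=\deg f$. The size hypothesis holds because $i_j\in B$ forces $|B_{i_j}|=|A_{i_j}|\ge M\sqrt q$ for each $j$, which is the regime where \cref{cor: h} applies. The non-degeneracy hypothesis — that $\widetilde f$ is not a constant multiple of a $d$-th power of a polynomial for any $d\ge 2$ — is exactly where the maximality of $\ell$ in $h=Cf^\ell$ enters: if $f$ were $c\,e^d$ with $d\ge 2$, then $h=(Cc^\ell)e^{d\ell}$ would contradict the choice of $\ell$, and this transfers to the constant multiple $\widetilde f$. \cref{cor: h} then yields distinct $x_1,\dots,x_k\in\F_q^*$ with $x_j\in B_{i_j}$, together with some $x_0\in\F_q$, satisfying $x_1\cdots x_k=\mu^{-1}f(x_0)$. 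Setting $a_j:=g^{i_j}x_j^\ell\in A_{i_j}\subset A$, a direct computation gives
\[
a_1\cdots a_k=g^{\,i_1+\cdots+i_k}(x_1\cdots x_k)^\ell=C\lambda\cdot\mu^{-\ell}f(x_0)^\ell=Cf(x_0)^\ell=h(x_0).
\]

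The step I expect to require the most care — and the one a referee would most likely probe — is verifying that $a_1,\dots,a_k$ are pairwise distinct, since \cref{cor: h} only guarantees distinctness of the $x_j$. I would split into cases: if $i_j\ne i_{j'}$, then $a_j\in g^{i_j}H$ and $a_{j'}\in g^{i_{j'}}H$ lie in distinct cosets, hence $a_j\ne a_{j'}$; if $i_j=i_{j'}=:i$, then $x_j\ne x_{j'}$ are distinct elements of $B_i$, and because the parametrization $x\mapsto g^ix^\ell$ is by construction a bijection from $B_i$ onto $A_i$, the images satisfy $a_j\ne a_{j'}$. Thus $a_1,\dots,a_k$ are distinct elements of $A$ with $a_1\cdots a_k=h(x_0)$, contradicting $(\star)$, so $s\notin kB$. (Throughout one may assume $q>\deg h$, since otherwise $q$ is bounded in terms of $k$ and $\deg h$ and \cref{thm:main} holds trivially by absorbing into the error term; this in particular ensures $q>\deg f$, as required for \cref{cor: h}.)
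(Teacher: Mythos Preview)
Your proposal is correct and follows essentially the same route as the paper's own proof: your correction factor $\mu$ is exactly the paper's $g^{t'}$, the application of \cref{cor: h} to $\mu^{-1}f$ with the sets $B_{i_1},\dots,B_{i_k}$ is the same, and your case-split for distinctness of the $a_j$'s mirrors the paper's. If anything, you are slightly more explicit in justifying the non-degeneracy hypothesis via the maximality of $\ell$ and in noting the harmless assumption $q>\deg h$.
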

\begin{poc}
Suppose otherwise that $s\in kB$. 
Then, there exist indices $i_1,\dots,i_k\in B$ (not necessarily distinct) such that
\[
i_1+\cdots+i_k \equiv s \pmod n.
\]
Let $i_1+\cdots+i_k=tn+s$ for some $t \in \Z$.
Note that there exists an integer $t'$ such that 
\begin{equation}\label{eq:t'}
(g^{t'})^{\ell}=\frac{g^{tn+s}}{C}.
\end{equation}
Indeed, since $C\in g^sH$, we may write $C=g^sh_0$ for some $h_0\in H$.
Then,
\[
\frac{g^{tn+s}}{C}=\frac{g^{tn}}{h_0}\in H.
\]
Since the index of $H$ is $n$, every element of $H$
is an $\ell$-th power in $\F_q^*$.
Thus, there exists $u\in\F_q^*$ with $u^\ell=g^{tn+s}/C$, and write $u=g^{t'}$. This gives our required $t'$.

Since $\ell$ is chosen maximal in the representation $h=Cf^\ell$,
the polynomial $f$ is not a constant multiple of a $d$-th power in $\F_q[x]$
for any integer $d|(q-1)$ with $d>1$.
Also, since $|B_i|=|A_i| \ge M\sqrt{q}$ for each $i$, by \cref{cor: h} applied to the polynomial $f(x)/g^{t'}$ and sets $B_{i_1},B_{i_2},\ldots, B_{i_k}$, there exist distinct $x_1, x_2,\ldots, x_k$, where $x_j\in B_{i_j}$ for $1\leq j \leq k$, such that
\[x_1 \cdots x_k = \frac{f(x_0)}{g^{t'}},\]
for some $x_0 \in \F_q$.
Thus, by equation~\eqref{eq:t'}, 
\[g^{i_1}x_1^\ell  g^{i_2}x_2^\ell \cdots g^{i_k}x_k^\ell  = g^{tn+s} (x_1 \cdots x_k)^{\ell}=C(f(x_0))^\ell.\] 
Note that $g^{i_j}x_j^\ell \in A_{i_j}\subset A$ for each $1\leq j\leq k$. Moreover, these $k$ elements are distinct. Indeed, let $1\leq j_1,j_2\leq k$ with $j_1\neq j_2$. If $i_{j_1}\neq i_{j_2}$, then $g^{i_{j_1}}x_{j_1}^{\ell}$ and $g^{i_{j_2}}x_{j_2}^{\ell}$ are in different cosets of $H$ so they are different; if $i_{j_1}=i_{j_2}$, then since $x_{j_1}\neq x_{j_2}$, by the construction of $B_{i_{j_1}}$, we know $g^{i_{j_1}}x_{j_1}^{\ell}\neq g^{i_{j_2}}x_{j_2}^{\ell}$.
This contradicts the defining property of $A$.
Thus, $s\notin kB$.
\end{poc}

Thus, by the definition of $m(k,n;s)$, we obtain
$ |B|\le m(k,n;s)$. 
We conclude that \[|A| =\sum_{i\in B}|A_i|+\sum_{i\notin B}|A_i|\leq |B|\cdot \frac{q-1}{n}+O(\sqrt{q})\le \frac{m(k,n;s)}{n}\cdot q+O(\sqrt{q}),\] as required.

\medskip

Now, we obtain a lower bound of $F_k(q;h)$.
Let $B_0\subset \Z_n$ with $|B_0|=m(k,n;s)$ and $s\notin kB_0$.
Define
\[ A_0:=\bigcup_{i\in B_0} g^iH \subset \F_q^*. \]
Then
\[|A_0|=|B_0| |H|=m(k,n;s) \cdot\frac{q-1}{n}. \]
To finish our proof, we show that for any distinct $a_1,\dots,a_k\in A_0$ and any $x\in\F_q$, we have
$a_1\cdots a_k\ne h(x)$.
For each $1\le j \le k$, choose $i_j\in B_0$ such that $a_j\in g^{i_j}H$.
Then, $a_1\cdots a_k \in g^{i_1+\cdots+i_k}H$,
so there exists $t\in kB_0$ such that $a_1\cdots a_k\in g^tH$.
Note that $h(x) \in g^s H$ for all $x \in \F_q$.
Since $s\notin kB_0$, we have $t\ne s$ in $\Z_n$, and therefore $g^tH\ne g^sH$.
This implies $a_1\cdots a_k\ne h(x)$ for all $x\in\F_q$.
\end{proof}

\begin{remark}\label{rem:structure}
Our proof shows that if $A\subset \F_q^*$ satisfies ($\star$) for $h$ in \cref{defn} and satisfies
\[
|A|\ge \frac{m(k,n;s)}{n}\cdot q - O(\sqrt q),
\]
then the set $A$ consists of $m(k,n;s)$ cosets of $H$, up to $O(\sqrt q)$ exceptional elements.
More precisely, there exists a set $B_0\subset \Z_n$ with $|B_0|=m(k,n;s)$ and $s\notin kB_0$
such that
\[
\left|A\ \triangle \bigcup_{i\in B_0} g^iH\right|=O(\sqrt q).
\]
In other words, if $m(k,n;s)>0$, then extremal sets are essentially unions of $m(k,n;s)$ cosets of $H$. 
\end{remark}

\begin{remark}
    We finally remark that the natural $k$-set analogues of Lemma~\ref{lem: Gyarmati} and Corollary~\ref{cor: h} do not hold. More precisely, if one replaces the hypotheses by $\prod_{i\in [k]} |A_i| \ge \Omega(q m^k)$ and $|A_i| \geq \Omega(q^{1/k})$, respectively, then the resulting statements fail in general. Here, the implicit constants only depend on $m=\deg f$ and $k$. Assume for contradiction that these analogues hold. Then, by following the proof of Theorem~\ref{thm:main}, one can deduce that 
    $$
        F_k(q; h) = \frac{m(k, n;s)}{n}\cdot q + O(q^{1/k}).
    $$
    However, Remark~\ref{rmk:rmk} $(3)$ guarantees that there exists a polynomial $h$ and infinitely many prime powers $q$ for which $m(k,n;s)=0$, while $F_k(q; h) = \Theta(\sqrt{q})$ for each fixed $k \geq 2$. Hence, taking $k$ strictly larger than two yields a contradiction.

\end{remark}

\section{An Estimate for $m(k,n;s)$}\label{sec: sec4}

In this section, we estimate $m(k,n;s)$ for each $s\in \Z_n$, where $k\ge2$ and $n$ are positive integers.
We obtain an upper bound from Bajnok's work \cite{B15} and a lower bound by a proper explicit construction.

\begin{proposition}
Let $k\ge2$ and $n$ be positive integers. Then, for each $s\in \Z_n$, we have
$$
\max_{d\mid n}\Big(\Big\lfloor\frac{d-1-\gcd(d,k)}{k}\Big\rfloor+1\Big)\frac{n}{d} \le m(k,n;s)\le \max_{d\mid n}\Big(\Big\lfloor\frac{d-2}{k}\Big\rfloor+1\Big)\frac{n}{d}.
$$     
\end{proposition}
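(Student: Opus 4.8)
The plan is to prove the two inequalities separately, and it is worth noting that neither bound involves $s$; for the lower bound this means we must exhibit, for the given $s$, a construction whose \emph{size} does not depend on $s$. \emph{For the upper bound}, the point is that $s\notin kB$ forces $kB\neq\Z_n$, which puts us squarely in the setting of Kneser's theorem (this is where \cite{B15} enters). We may assume $B\neq\varnothing$. Let $H=\operatorname{Stab}(kB)$ and $d=[\Z_n:H]$; then $d\mid n$ and $d\geq 2$, since $H$ is a proper subgroup. Kneser's inequality for $k$-fold sumsets gives $|kB|\geq k|B+H|-(k-1)|H|$, while $kB$, being a proper union of $H$-cosets, satisfies $|kB|\leq n-|H|=(d-1)|H|$. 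Writing $|B+H|=c|H|$ with $c\geq 1$ an integer, these combine to $kc-(k-1)\leq d-1$, i.e. $c\leq\lfloor(d-2)/k\rfloor+1$; since $|B|\leq|B+H|=c\cdot(n/d)$, we get $|B|\leq\big(\lfloor(d-2)/k\rfloor+1\big)(n/d)\leq\max_{d\mid n}\big(\lfloor(d-2)/k\rfloor+1\big)(n/d)$.

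\emph{For the lower bound}, I would fix a divisor $d\mid n$ and prove $m(k,n;s)\geq r\cdot(n/d)$ where $r=\lfloor(d-1-\gcd(d,k))/k\rfloor+1$; maximizing over $d$ then finishes the proof. If $d\mid k$ then $r=0$ and the bound is vacuous, so assume $d\nmid k$ and set $e=\gcd(d,k)<d$, so that $1\leq r\leq d-e<d$. The first step is a quotient reduction: let $\pi\colon\Z_n\to\Z_n/d\Z_n\cong\Z_d$ be the natural surjection, so $\pi(s)=s\bmod d=:\bar s$ and $\ker\pi$ has order $n/d$. A short check shows that for $B'\subseteq\Z_d$, the set $B:=\pi^{-1}(B')$ satisfies $kB=\pi^{-1}(kB')$ (any lift of a $k$-term sum from $B'$ is itself a $k$-term sum from $B$, since one may choose the lifts of the first $k-1$ summands freely). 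Hence, if $\bar s\notin kB'$, then $s\notin kB$ and $|B|=|B'|\cdot(n/d)$, so it suffices to produce $B'\subseteq\Z_d$ with $|B'|=r$ and $\bar s\notin kB'$.

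For this, I would take $B'=\{a+1,a+2,\dots,a+r\}\subseteq\Z_d$ for a suitable $a$; then $kB'=ka+\{k,k+1,\dots,kr\}$, and since $L:=kr-k+1=k(r-1)+1\leq d-e<d$, the set $\{k,\dots,kr\}$ reduces modulo $d$ to an arc of length $L$. We need $\bar s-ka$ to avoid this arc. As $a$ ranges over $\Z_d$, $ka$ ranges over the subgroup $e\Z_d$ of order $d/e$, so $\bar s-ka$ ranges over the entire coset $\bar s+e\Z_d$, which has $d/e$ elements. Any arc of length $L$ meets a coset of $e\Z_d$ in at most $\lceil L/e\rceil\leq\lceil(d-e)/e\rceil=d/e-1$ points, so the coset contains a point outside the arc; choosing $a$ so that $\bar s-ka$ is such a point yields $\bar s\notin kB'$, completing the construction.

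I expect the lower bound to be the crux. The idea of descending to $\Z_d$ and using a translated interval there is the non-routine step, and the counting has to be done carefully: the uniformity in $s$ is exactly what forces the $\gcd(d,k)$ correction into $r$, and one needs the interval length $L=k(r-1)+1$ and the ceiling estimate $\lceil(d-e)/e\rceil=d/e-1$ to line up precisely for the argument to close. The upper bound, by contrast, is essentially a bookkeeping exercise on top of Kneser's theorem.
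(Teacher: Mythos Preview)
Your proof is correct and takes essentially the same approach as the paper: the lower-bound construction (descend to $\Z_d$, take a translated interval of length $r$, and count intersections with the coset $\bar s+e\Z_d$ to find a good translate) matches the paper's exactly. For the upper bound the paper simply invokes \cite[Theorem~6]{B15}, whose proof is precisely the Kneser-theorem computation you carry out, so your version is self-contained but not a different route.
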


\begin{proof}
The upper bound is immediate from \cite[Theorem 6]{B15}.
Let $\chi(\Z_n,k)$ be the minimum value of $m$, if exists, for which the $k$-fold sumset of every $m$-subset of $G$ equals $G$ itself.
Then, every subset $B\subset\Z_n$ with $|B|\ge \chi(\Z_n,k)$ satisfies
$kB=\Z_n$, so $m(k,n;s)\le \chi(\Z_n,k)-1$.
The formula for $\chi(\Z_n,k)-1$ can be found in \cite[Theorem 6]{B15}, which coincides with the upper bound.

We now consider a lower bound of $m(k,n;s)$. 
Fix a divisor $d\mid n$ and let $K:=d\Z_n\le \Z_n$ be the subgroup of index $d$.
Let $\pi:\Z_n\to \Z_n/K \cong \Z_d$ be the canonical projection.
Set $r:=\gcd(d,k)$ and define
\[
t:=\Big\lfloor\frac{d-1-r}{k}\Big\rfloor+1.
\]
Choose a subset $T\subset \Z_d$ of size $t$ as
\[ T=\{a,a+1,\dots,a+t-1\}\subset \Z_d, \]
for some $a\in \Z_d$.
Let $B:=\pi^{-1}(T)\subset \Z_n$.
Then $|B|=|T| |K|=\frac{tn}{d}$.

We claim that $s\notin kB$ for a suitable choice of $a$.
Indeed, since $\pi$ is a homomorphism, we have $\pi(kB)=kT$.
In addition, since $T$ is an interval of length $t$, we have
\[
kT=\{ka,ka+1,\dots,ka+k(t-1)\},
\]
so $|kT|=k(t-1)+1\le d-r$ by the definition of $t$.
Let
\[
F:=\pi(s)-\{0,1,\ldots,k(t-1)\}\subset \Z_d.
\]
Then $\pi(s)\notin kT$ is equivalent to $ka\notin F$.

Now, the set of possible values of $ka$ as $a$ varies is exactly $k\Z_d=r\Z_d$.
Since $r\Z_d=\{0,r,2r,\ldots,d-r\}$, any interval that contains all of $r\Z_d$ must have length at least $d-r+1$.
Thus, $F$ cannot contain all elements of $r\Z_d$ since $|F|=k(t-1)+1\le d-r$.
So, we may choose $y\in r\Z_d\setminus F$.
Since $y\in r\Z_d=k\Z_d$, there exists $a\in\Z_d$ such that $ka=y$.
For this choice of $a$, we have $ka\notin F$, hence $\pi(s)\notin kT$,
and therefore $s\notin kB$.
\end{proof}

When $\gcd(k,n)=1$, we obtain the following corollary immediately.

\begin{corollary}\label{cor:mkns}
Let $k\ge2$ and $n$ be positive integers.
If $\gcd(k,n)=1$, then for each $s\in \Z_n$, we have
\begin{equation}\label{eq: gcd(k,n)}
m(k,n;s)=\max_{d \mid n} \left(\bigg\lfloor\frac{d-2}{k}\bigg\rfloor+1\right) \frac{n}{d}.
\end{equation}   
\end{corollary}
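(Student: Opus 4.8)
The plan is to read off the corollary directly from the two-sided estimate of the preceding Proposition, by verifying that under the hypothesis $\gcd(k,n)=1$ its lower bound and upper bound coincide term by term over the divisors of $n$.

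First I would record the elementary observation that if $d\mid n$ and $\gcd(k,n)=1$, then $\gcd(d,k)=1$ as well: any prime dividing both $d$ and $k$ would divide $n$ (since $d\mid n$) and $k$, hence divide $\gcd(k,n)=1$, a contradiction. Consequently, for every divisor $d$ of $n$ we have $d-1-\gcd(d,k)=d-2$, and therefore the summand
\[
\left(\left\lfloor\frac{d-1-\gcd(d,k)}{k}\right\rfloor+1\right)\frac{n}{d}
\]
occurring in the lower bound of the Proposition is literally the same number as the summand $\left(\left\lfloor\frac{d-2}{k}\right\rfloor+1\right)\frac{n}{d}$ occurring in its upper bound.

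Then I would take the maximum over all divisors $d\mid n$ of both (equal) expressions: the resulting quantity $\max_{d\mid n}\left(\left\lfloor\frac{d-2}{k}\right\rfloor+1\right)\frac{n}{d}$ is simultaneously the lower bound and the upper bound for $m(k,n;s)$ furnished by the Proposition. Squeezing $m(k,n;s)$ between two equal quantities yields the claimed identity \eqref{eq: gcd(k,n)}, and this holds for every $s\in\Z_n$.

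I do not expect any genuine obstacle here; the entire content sits in the Proposition, and the only point to notice is the propagation of coprimality from $\gcd(k,n)=1$ to $\gcd(k,d)=1$ for each $d\mid n$. The one spot warranting a word of care is that the divisor maximizing the lower bound and the divisor maximizing the upper bound may a priori differ, but since the two expressions agree for each individual $d$ this is automatic and requires no separate argument.
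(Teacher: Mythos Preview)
Your proposal is correct and follows precisely the approach the paper intends: the paper merely says the corollary follows ``immediately'' from the Proposition, and you have spelled out the one-line reason, namely that $d\mid n$ and $\gcd(k,n)=1$ force $\gcd(d,k)=1$, so that $d-1-\gcd(d,k)=d-2$ and the two bounds collapse.
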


When $s=0$, this quantity $m(k,n;s)$ asks for the maximum size of a \emph{$k$-zero-free subset} of $\Z_n$. It was first studied by Sargsyan \cite{SS13}. He was not able to determine the exact values of $m(k,n;0)$ in general, although several lower and upper bounds are given in \cite[Section 3]{SS13}. 
Instead, he determined an exact formula for $m(k,n;0)$ when $\gcd(k,n)=1$.
In this case, his result also can yield the same formula for $m(k,n;s)$, described in \cref{cor:mkns}.

Indeed, for $t\in\Z_n$ we have $k(B+t)=kB+kt$.
Since $\gcd(k,n)=1$, multiplication by $k$ is a bijection on $\Z_n$, so for any $s$ we can choose
$t\equiv -k^{-1}s\pmod n$.
Note that 
$0 \notin k(B+t)$ is equivalent to $-kt=s \notin kB$.
Since $|B|=|B+t|$, $m(k,n;s)=m(k,n;0)$ for all $s \in \Z_n$.
When $s=0$, \cite[Corollary in Section 3]{SS13} gives an exact formula of $m(k,n;0)$, which coincides with equation \eqref{eq: gcd(k,n)}.

\section*{Acknowledgments}
C.H. Yip thanks Institute for Basic Science for hospitality on his visit, where part of this project was discussed.
H. Lee was supported by the National Research Foundation of Korea (NRF) grant funded by the Korea government(MSIT) No. RS-2023-00210430, and the Institute for Basic Science (IBS-R029-C4).
C.H. Yip was supported in part by an NSERC fellowship.
S. Yoo was supported by the Institute for Basic Science (IBS-R029-C1). 

\bibliographystyle{abbrv}
\bibliography{references}

\end{document}